\newtheorem{thm}{Theorem}[section]
\newtheorem{cor}[thm]{Corollary}
\newtheorem{lem}[thm]{Lemma}
\newtheorem{exam}[thm]{Example}
\numberwithin{equation}{section}
\begin{document}

\title[Matrices Over Zhou nil-clean Rings]{Matrices Over Zhou nil-clean Rings}

\author{Marjan Sheibani}
\author{Huanyin Chen}
\address{
Faculty of Mathematics\\Statistics and Computer Science\\  Semnan University\\ Semnan, Iran}
\email{<m.sheibani1@gmail.com>}
\address{
Department of Mathematics\\ Hangzhou Normal University\\ Hang -zhou, China}
\email{<huanyinchen@aliyun.com>}

\subjclass[2010]{16S34,16U99, 16E50.} \keywords{tripotent matrix; nilpotent matrix; Zhou ring; Zhou nil-clean ring.}

\begin{abstract}
A ring $R$ is Zhou nil-clean if every element in $R$ is the sum of two tripotents and a nilpotent that commute. Let $R$ be a Zhou nil-clean ring. If $R$ is 2-primal (of bounded index), we prove that every square matrix over $R$ is the sum of two tripotents and a nilpotent. These provides a large class of rings over which every square matrix has such decompositions by tripotent and nilpotent matrices.\end{abstract}

\maketitle

\section{Introduction}

Throughout, all rings are associative with an identity. A ring $R$ is strongly nil-clean provided that every element in $R$ is the sum of an idempotent and a nilpotent that commutate (see~\cite{D}). A ring $R$ is strongly weakly nil-clean provided that every element in $R$ is the sum or difference of a nilpotent and an idempotent that commutate (see~\cite{CS2}). An element $a$ in a ring $R$ is tripotent if $a=a^3$.
A ring $R$ is strongly 2-nil-clean if every element in $R$ is the sum of a tripotent and a nilpotent that commute (see~\cite{CS}). The subjects of strongly
and strongly weakly nil-clean rings are interested for so many mathematicians, e.g., ~\cite{B,DW,K,KWZ} and ~\cite{KZ}.

Very recently, Zhou investigated a class of rings in which elements are the sum of two tripotents and a nilpotent that commute (see~\cite{Z}). For the conventience, we call such ring a Zhou nil-clean ring. The purpose of this paper is to explore matrices over Zhou nil-clean rings.
A ring $R$ is 2-primal if its Baer-McCoy radical (i.e., it is equal to the intersection of all prime ideals in $R$)
coincides with the set of nilpotents in $R$. For instances, every commutative (reduced) ring is 2-primal. A ring $R$ is of bounded index if there exists $m\in {\Bbb N}$ such that $x^m=0$ for all nilpotent $x\in R$.
Let $R$ be Zhou nil-clean. If $R$ is 2-primal (of bounded index), we prove that every square matrix over $R$ is the sum of two tripotents and a nilpotent.
These provides a large class of rings over which every square matrix has such decompositions by tripotent and nilpotent matrices.

We use $N(R)$ to denote the set of all nilpotent elements in $R$ and $J(R)$ the Jacobson radical of $R$.
${\Bbb N}$ stands for the set of all natural numbers.

\section{Zhou Rings}

A ring $R$ is a Zhou ring if every element in $R$ is the sum of two tripotents that commute. The structure of Zhou rings was studied in~\cite{KW}.
This section is devoted to preliminary observations concerning on matrices over Zhou rings which will be used in the sequel. We begin with

\begin{lem} Every square matrix over ${\Bbb Z}_3$ is the sum of two idempotents and a nilpotent.\end{lem}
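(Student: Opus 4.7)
The plan is to proceed in two steps. First, I would observe that every tripotent matrix $T\in M_n(\mathbb{Z}_3)$ is a sum of two commuting idempotents: since $T^3 = T$ and $x^3-x=x(x-1)(x+1)$ splits over $\mathbb{Z}_3$ into distinct linear factors, the Chinese Remainder Theorem applied inside the commutative subring $\mathbb{Z}_3[T]$ yields pairwise orthogonal idempotents $P_0, P_1, P_{-1}$ (each a polynomial in $T$) such that $T = 0\cdot P_0+1\cdot P_1 + (-1)\cdot P_{-1} = P_1 + P_{-1} + P_{-1}$, which is the sum of the two commuting idempotents $P_1+P_{-1}$ and $P_{-1}$.

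In view of this reduction it suffices to show that $M_n(\mathbb{Z}_3)$ is $2$-nil-clean, i.e.\ that every matrix $A$ decomposes as $A=T+N$ with $T$ tripotent and $N$ nilpotent; the lemma will then follow from $A=(P_1+P_{-1})+P_{-1}+N$. I would prove this $2$-nil-clean property by passing to the rational canonical form of $A$ and then applying the primary decomposition of its characteristic polynomial, since being a sum of a tripotent and a nilpotent is preserved under similarity and under direct sums. This reduces the problem to a single companion block $C(p^k)$ with $p\in\mathbb{Z}_3[x]$ monic irreducible and $k\geq 1$. When $\deg p = 1$, say $p=x-a$ for $a\in\{0,1,-1\}$, one has $C((x-a)^k)=aI_k+J_k$ with $J_k$ a nilpotent Jordan block, and the scalar $aI_k$ is itself a tripotent, so the decomposition is immediate.

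The main obstacle is the case $\deg p\geq 2$: here $C(p^k)$ generates a commutative local subring isomorphic to $\mathbb{Z}_3[x]/(p^k)$, whose only tripotents are $0$ and $1$, so the desired tripotent must be chosen from outside this subring. My strategy is to first reduce to $k=1$ by passing to the rational Jordan form, in which $C(p)$ appears on the diagonal and a nilpotent off-diagonal contribution can be absorbed into the nilpotent summand after a suitable choice of basis. Then, for each monic irreducible $p\in\mathbb{Z}_3[x]$ of degree $d\geq 2$, I would construct an explicit tripotent $T_0\in M_d(\mathbb{Z}_3)$ with $C(p)-T_0$ nilpotent, using the trace relation $\operatorname{tr}(T_0)\equiv \operatorname{tr}(C(p))\pmod 3$ as a guide. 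For $d=2$ there are only three irreducibles over $\mathbb{Z}_3$ and the trace and determinant conditions for $C(p)-T_0$ to be nilpotent reduce the construction to a short case analysis (for instance, for $p=x^2+1$ one finds $T_0=\bigl(\begin{smallmatrix}1&0\\0&-1\end{smallmatrix}\bigr)$ and verifies that the remainder is nilpotent); for $d\geq 3$ I would exploit the regular embedding $\mathbb{F}_{3^d}\hookrightarrow M_d(\mathbb{Z}_3)$ to choose $T_0$ outside the image of $\mathbb{F}_{3^d}$. This final explicit construction is where I expect the bulk of the technical work.
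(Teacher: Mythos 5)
Your first two reductions are sound: the spectral decomposition $T=P_1-P_{-1}=(P_1+P_{-1})+P_{-1}$ does convert any tripotent over ${\Bbb Z}_3$ into a sum of two commuting idempotents (and in fact repairs a mismatch in the paper itself, whose proof produces a matrix $E_2$ with $E_2^2=-E_2$, tripotent but not idempotent, despite the statement saying ``idempotents''); and the passage to companion blocks $C(p^k)$ and then to $C(p)$ via the rational Jordan form is legitimate, since a block upper triangular matrix with nilpotent diagonal blocks is nilpotent. The genuine gap is the step you yourself defer: for a monic irreducible $p$ of degree $d\geq 3$ you never produce a tripotent $T_0$ with $C(p)-T_0$ nilpotent. ``Exploit the regular embedding ${\Bbb F}_{3^d}\hookrightarrow M_d({\Bbb Z}_3)$ to choose $T_0$ outside the image'' is not an argument: nilpotency of $C(p)-T_0$ imposes $d$ conditions (the vanishing of every non-leading coefficient of its characteristic polynomial), and nothing you say explains why some tripotent satisfies all of them. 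Note also that your intermediate target --- every matrix over ${\Bbb Z}_3$ is the sum of a \emph{single} tripotent and a nilpotent --- is essentially the weak nil-cleanness of $M_n({\Bbb Z}_3)$, a nontrivial theorem in its own right, so the hard case cannot be waved away.

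The paper avoids all of this by spending both tripotents allowed in the statement. For a companion matrix $A$ it takes $W$ to be the subdiagonal shift (nilpotent) and splits the last column as $E_1+E_2$, where $E_1=ve_n^T$ with $v=(c_0,\dots,c_{n-2},1)^T$ satisfies $E_1^2=(e_n^Tv)E_1=E_1$, and $E_2=(c_{n-1}-1)e_ne_n^T$ satisfies $E_2^3=E_2$ for every value of $c_{n-1}$ in ${\Bbb Z}_3$. That is a three-line verification with no case analysis on irreducible factors. To salvage your route you would have to either actually prove the $d\geq 3$ construction or cite the weak nil-clean theorem for $M_n({\Bbb Z}_3)$; as written, the proof is incomplete at its central step.
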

\begin{proof}  As every
matrix over a field has a Frobenius normal form, we may assume
that $A=\left(
\begin{array}{cccccc}
0&&&&&c_0\\
1&0&&&&c_1\\
&1&0&&&c_2\\
&&&\ddots&&\vdots\\
&&&\ddots&0&c_{n-2}\\
&&&&1&c_{n-1}
\end{array}
\right).$

Case I. $c_{n-1}=0$. Choose $$W=\left(
\begin{array}{cccccc}
0&&&&&0\\
1&0&&&&0\\
&1&0&&&0\\
&&&\ddots&&\vdots\\
&&&\ddots&0&0\\
&&&&1&0
\end{array}
\right), E_1=\left(
\begin{array}{cccccc}
0&&&&&c_0\\
0&0&&&&c_1\\
&0&0&&&c_2\\
&&&\ddots&&\vdots\\
&&&\ddots&0&c_{n-2}\\
&&&&0&1
\end{array}
\right),$$ $$E_2=\left(
\begin{array}{cccccc}
0&&&&&0\\
0&0&&&&0\\
&0&0&&&0\\
&&&\ddots&&\vdots\\
&&&\ddots&0&0\\
&&&&0&-1
\end{array}
\right).$$ Then $E_1^3=E_1$ and $E_2^3=E_2$, and so $A=E_1+E_2+W$.

Case II. $c_{n-1}=1$. Choose $$W=\left(
\begin{array}{cccccc}
0&&&&&0\\
1&0&&&&0\\
&1&0&&&0\\
&&&\ddots&&\vdots\\
&&&\ddots&0&0\\
&&&&1&0
\end{array}
\right), E=\left(
\begin{array}{cccccc}
0&&&&&c_0\\
0&0&&&&c_1\\
&0&0&&&c_2\\
&&&\ddots&&\vdots\\
&&&\ddots&0&c_{n-2}\\
&&&&0&1
\end{array}
\right).$$ Then $E^3=E$, and so $A=E+0+W$.

Case III. $c_{n-1}=2$. Choose $$\begin{array}{c}
W=\left(
\begin{array}{cccccc}
0&&&&&0\\
1&0&&&&0\\
&1&0&&&0\\
&&&\ddots&&\vdots\\
&&&\ddots&0&0\\
&&&&1&0
\end{array}
\right), E_1=\left(
\begin{array}{cccccc}
0&&&&&c_0\\
0&0&&&&c_1\\
&0&0&&&c_2\\
&&&\ddots&&\vdots\\
&&&\ddots&0&c_{n-2}\\
&&&&0&1
\end{array}
\right),\\
E_2=\left(
\begin{array}{cccccc}
0&&&&&0\\
0&0&&&&0\\
&0&0&&&0\\
&&&\ddots&&\vdots\\
&&&\ddots&0&0\\
&&&&0&1
\end{array}
\right).
\end{array}$$ Then $E_1^3=E_1$ and $E_2^3=E_2$, and so $A=E_1+E_2+W$.
 \end{proof}

\begin{lem} Every square matrix over ${\Bbb Z}_5$ is the sum of two tripotents and a nilpotent.\end{lem}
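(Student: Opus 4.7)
The plan is to mirror the proof of Lemma 2.1, using the fact that ${\Bbb Z}_5$ is itself a Zhou ring. First I would observe that the tripotents in ${\Bbb Z}_5$ are the roots of $x^3-x$, namely $\{0,1,-1\}$, and that every element of ${\Bbb Z}_5$ is a sum of two such roots: $0=0+0$, $1=1+0$, $2=1+1$, $3=(-1)+(-1)$, $4=(-1)+0$. This is the only arithmetic input that changes between the ${\Bbb Z}_3$ case (where idempotents sufficed) and the ${\Bbb Z}_5$ case.

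By Frobenius normal form I may assume $A$ is a single companion block with last column $(c_0,c_1,\ldots,c_{n-1})^T$ and $1$'s on the subdiagonal. Let $W$ denote the nilpotent matrix with $1$'s on the subdiagonal and zeros elsewhere, so that $A-W$ is supported entirely in the last column. I plan to write $A-W=E_1+E_2$ with $E_1,E_2$ tripotent, giving $A=E_1+E_2+W$ as required.

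The key observation is that any matrix of the form $E=\mathbf{v}\, e_n^T$ satisfies $E^2=v_{n-1}\,E$, and hence $E^3=v_{n-1}^2\,E$; thus such an $E$ is tripotent iff either $\mathbf{v}=0$ or $v_{n-1}\in\{1,-1\}$. I would then case-split on $c_{n-1}\in\{0,1,2,3,4\}$: in each case I choose two values in $\{0,\pm 1\}$ summing to $c_{n-1}$ to serve as the $(n,n)$-entries of $E_1$ and $E_2$, and place the off-diagonal last-column entries $c_0,\ldots,c_{n-2}$ entirely into whichever of $E_1,E_2$ has a nonzero $(n,n)$-entry.

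The main obstacle is purely notational: writing out explicit matrix displays for the five cases $c_{n-1}=0,1,2,3,4$. The most delicate is $c_{n-1}=3$, where both $E_1$ and $E_2$ must be nonzero; there I would take the $(n,n)$-entries of both to be $-1$ and load $(c_0,\ldots,c_{n-2})$ into the last column of $E_1$ only, so that each $E_i$ remains a rank-one tripotent.
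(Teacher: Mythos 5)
Your approach is the paper's own: reduce to a single companion block via the Frobenius normal form, peel off the subdiagonal nilpotent $W$, and split the remaining last-column matrix into two rank-one tripotents, using the (correct) criterion that $E=\mathbf{v}\,e_n^T$ satisfies $E^2=v_{n-1}E$ and $E^3=v_{n-1}^2E$. Your choices for $c_{n-1}=1,2,3,4$ all work; in particular your treatment of $c_{n-1}=3$ via $(-1)+(-1)$ with the off-diagonal entries loaded into $E_1$ is exactly the paper's case $c_{n-1}=-2$.

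However, your recipe as stated breaks in the case $c_{n-1}=0$. Your arithmetic table decomposes $0$ as $0+0$, so neither $E_1$ nor $E_2$ has a nonzero $(n,n)$-entry, and your instruction to place $c_0,\dots,c_{n-2}$ ``into whichever of $E_1,E_2$ has a nonzero $(n,n)$-entry'' has nowhere to put them. You cannot instead load them into a summand with $(n,n)$-entry $0$: by your own criterion, a nonzero $\mathbf{v}\,e_n^T$ with $v_{n-1}=0$ satisfies $E^2=0$, hence $E^3=0\neq E$, so it is nilpotent rather than tripotent. The fix is the one the paper uses: write $0=1+(-1)$, put $(c_0,\dots,c_{n-2},1)^T$ in the last column of $E_1$, and let $E_2$ be the matrix whose only nonzero entry is $-1$ in position $(n,n)$. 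With that one-line correction to your table, your proof coincides with the paper's.
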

\begin{proof} As every
matrix over a field has a Frobenius normal form, we may assume
that $A=\left(
\begin{array}{cccccc}
0&&&&&c_0\\
1&0&&&&c_1\\
&1&0&&&c_2\\
&&&\ddots&&\vdots\\
&&&\ddots&0&c_{n-2}\\
&&&&1&c_{n-1}
\end{array}
\right).$

Case I. $c_{n-1}=0$. Choose $$\begin{array}{c}
W=\left(
\begin{array}{cccccc}
0&&&&&0\\
1&0&&&&0\\
&1&0&&&0\\
&&&\ddots&&\vdots\\
&&&\ddots&0&0\\
&&&&1&0
\end{array}
\right), E_1=\left(
\begin{array}{cccccc}
0&&&&&c_0\\
0&0&&&&c_1\\
&0&0&&&c_2\\
&&&\ddots&&\vdots\\
&&&\ddots&0&c_{n-2}\\
&&&&0&1
\end{array}
\right),\\
E_2=\left(
\begin{array}{cccccc}
0&&&&&0\\
0&0&&&&0\\
&0&0&&&0\\
&&&\ddots&&\vdots\\
&&&\ddots&0&0\\
&&&&0&-1
\end{array}
\right).
\end{array}$$ Then $E_1^3=E_1$ and $E_2^3=E_2$, and so $A=E_1+E_2+W$.

Case II. $c_{n-1}=1$. Choose $$W=\left(
\begin{array}{cccccc}
0&&&&&0\\
1&0&&&&0\\
&1&0&&&0\\
&&&\ddots&&\vdots\\
&&&\ddots&0&0\\
&&&&1&0
\end{array}
\right), E=\left(
\begin{array}{cccccc}
0&&&&&c_0\\
0&0&&&&c_1\\
&0&0&&&c_2\\
&&&\ddots&&\vdots\\
&&&\ddots&0&c_{n-2}\\
&&&&0&1
\end{array}
\right).$$ Then $E^3=E$, and so $A=E+0+W$.

Case III. $c_{n-1}=-1$. Choose $$W=\left(
\begin{array}{cccccc}
0&&&&&0\\
1&0&&&&0\\
&1&0&&&0\\
&&&\ddots&&\vdots\\
&&&\ddots&0&0\\
&&&&1&0
\end{array}
\right), E=\left(
\begin{array}{cccccc}
0&&&&&c_0\\
0&0&&&&c_1\\
&0&0&&&c_2\\
&&&\ddots&&\vdots\\
&&&\ddots&0&c_{n-2}\\
&&&&0&-1
\end{array}
\right).$$ Then $E^2=-E$, and so $E^3=E$ and $A=E+0+W$.

Case IV. $c_{n-1}=2$. Choose $$\begin{array}{c}
W=\left(
\begin{array}{cccccc}
0&&&&&0\\
1&0&&&&0\\
&1&0&&&0\\
&&&\ddots&&\vdots\\
&&&\ddots&0&0\\
&&&&1&0
\end{array}
\right), E_1=\left(
\begin{array}{cccccc}
0&&&&&c_0\\
0&0&&&&c_1\\
&0&0&&&c_2\\
&&&\ddots&&\vdots\\
&&&\ddots&0&c_{n-2}\\
&&&&0&1
\end{array}
\right),\\
E_2=\left(
\begin{array}{cccccc}
0&&&&&0\\
0&0&&&&0\\
&0&0&&&0\\
&&&\ddots&&\vdots\\
&&&\ddots&0&0\\
&&&&0&1
\end{array}
\right).
\end{array}$$ Then $E_1^3=E_1$ and $E_2^3=E_2$, and so $A=E_1+E_2+W$.

Case IV. $c_{n-1}=-2$. Choose $$\begin{array}{c}
W=\left(
\begin{array}{cccccc}
0&&&&&0\\
1&0&&&&0\\
&1&0&&&0\\
&&&\ddots&&\vdots\\
&&&\ddots&0&0\\
&&&&1&0
\end{array}
\right), E_1=\left(
\begin{array}{cccccc}
0&&&&&c_0\\
0&0&&&&c_1\\
&0&0&&&c_2\\
&&&\ddots&&\vdots\\
&&&\ddots&0&c_{n-2}\\
&&&&0&-1
\end{array}
\right),\\
E_2=\left(
\begin{array}{cccccc}
0&&&&&0\\
0&0&&&&0\\
&0&0&&&0\\
&&&\ddots&&\vdots\\
&&&\ddots&0&0\\
&&&&0&-1
\end{array}
\right).
\end{array}$$ Then $E_1^3=E_1$ and $E_2^3=E_2$, and so $A=E_1+E_2+W$.

Therefore we complete the proof.\end{proof}

Recall that a ring $R$ is a Yaqub ring if it is the subdirect product of ${\Bbb Z}_3$'s. A ring $R$ is a Bell ring if it is the subdirect product of ${\Bbb Z}_5$'s. We have

\begin{lem} Every Zhou ring is isomorphic to a strongly nil-clean ring of bounded index, a Yaqub ring, a Bell ring or products of such rings.
\end{lem}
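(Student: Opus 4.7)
The plan is to derive a universal polynomial identity from the Zhou hypothesis, use it to bound the characteristic of $R$, and then apply the Chinese Remainder Theorem to separate $R$ into three direct factors realizing the three types in the conclusion. For commuting tripotents $e, f$, direct binomial expansion gives $(e+f)^3 = (e+f) + 3ef(e+f)$ and $(e+f)^5 = (e+f) + 15\,ef(e+f)$; combining them produces
\[ a^5 - 5a^3 + 4a = 0 \]
for every $a \in R$. Evaluating at $3 \cdot 1_R$ forces $120 \cdot 1_R = 0$, and since $120 = 8 \cdot 3 \cdot 5$ has pairwise coprime factors, CRT produces orthogonal central idempotents decomposing $R \cong R_8 \times R_3 \times R_5$, with each $R_d$ annihilated by $d$ and still a Zhou ring.

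The Yaqub and Bell components fall out cleanly. In $R_3$ the characteristic kills the cross-term in $(e+f)^3$, so every element is tripotent; Jacobson's commutativity theorem then applies, and since $x^3 = x$ forces any field quotient of characteristic $3$ to be ${\Bbb Z}_3$, the ring $R_3$ is a Yaqub ring. Reducing the identity modulo $5$ leaves $x^5 = x$ in $R_5$; again Jacobson's theorem gives commutativity and the subdirect factors are fields of characteristic $5$ satisfying $x^5 = x$, hence ${\Bbb Z}_5$, so $R_5$ is a Bell ring.

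The essential work is in $R_8$. Here $2R_8$ is a nilpotent ideal since $(2R_8)^3 \subseteq 8R_8 = 0$, and the quotient $R_8/2R_8$ is a Zhou ring of characteristic $2$. In that characteristic each tripotent $t$ decomposes as $t^2 + (t - t^2)$ with $t^2$ idempotent and $(t - t^2)^2 = 2t^2 - 2t = 0$, so writing $\bar a = \bar e + \bar f$ exhibits $R_8/2R_8$ as strongly nil-clean. The standard lifting of strong nil-cleanness across the nilpotent ideal $2R_8$ then transports the property to $R_8$ itself. For the bounded-index conclusion, iterate $x^5 = 5x^3 - 4x$: a short calculation gives $x^7 = 21x^3 - 20x$, which reduces modulo $8$ to $x^7 = x^5$, so $x^5(x^2 - 1) = 0$ holds in $R_8$; for any nilpotent $x$ the element $1 - x^2$ is a unit, forcing $x^5 = 0$ and bounding the nilpotency index uniformly by $5$. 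The subtle point is the clean execution of the lifting step across $2R_8$ — once that is handled, the index bound follows immediately from the iterated identity.
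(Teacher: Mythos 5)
Your proof is correct, but it takes a genuinely different route from the paper. The paper's proof is essentially a citation: it invokes the structure theorem \cite[Theorem 5.2]{KW} to obtain the decomposition into $R_1\times R_2\times R_3$ with $R_2$ Yaqub and $R_3$ Bell, and only adds the computation $x^6=x^4$ for $x\in J(R_1)$ (hence $J(R_1)^4=0$) to upgrade $R_1$ to a strongly nil-clean ring of bounded index via \cite[Theorem 2.7]{KWZ}. You instead re-derive the needed part of that structure theorem from scratch: the identity $a^5-5a^3+4a=0$ (your expansions $(e+f)^3=(e+f)+3ef(e+f)$ and $(e+f)^5=(e+f)+15ef(e+f)$ are right), the consequence $120\cdot 1_R=0$, the CRT splitting $R\cong R_8\times R_3\times R_5$, Jacobson's commutativity theorem for the $3$- and $5$-components, and reduction modulo the nilpotent ideal $2R_8$ for the $2$-component; the computation $x^7=21x^3-20x\equiv x^5 \pmod 8$ does give $x^5=0$ for every nilpotent $x\in R_8$, so the index is bounded (by $5$ rather than the paper's $4$, which is immaterial). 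Two steps deserve an explicit line. First, your decomposition $\bar a=(\bar e^2+\bar f^2)+\bigl((\bar e-\bar e^2)+(\bar f-\bar f^2)\bigr)$ only exhibits strong nil-cleanness once you observe that a sum of two commuting idempotents is again idempotent in characteristic $2$ (since $(g+h)^2=g+h+2gh$); alternatively, note that $a-a^2=(e-e^2)+(f-f^2)$ is a sum of two commuting square-zero elements, hence nilpotent, and invoke the characterization of strongly nil-clean rings by ``$a-a^2$ nilpotent for all $a$'' from \cite[Theorem 2.7]{KWZ} --- this also subsumes your ``standard lifting'' across $2R_8$ in one stroke. Second, the passage from ``commutative with $x^3=x$ and $3=0$'' (resp.\ $x^5=x$ and $5=0$) to ``subdirect product of ${\Bbb Z}_3$'s'' (resp.\ ${\Bbb Z}_5$'s) should note that the ring is reduced, so the intersection of its prime ideals is zero and each quotient by a prime is a finite field of the stated characteristic. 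What your approach buys is a self-contained proof independent of \cite{KW}; what it costs is length.
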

\begin{proof} Let $R$ be a Zhou ring. In view of~\cite[Theorem 5.2]{KW}, $R$ is a ring $R_1$, a Yaqub ring $R_2$, a Bell ring $R_3$ or products of such rings. Here, $R_1/J(R_1)$ is Boolean and $U(R_1)$ is a group of exponent $2$. Let $x\in J(R_1)$. By the proof of ~\cite[Theorem 5.2]{KW}, $x^6=x^4$, and so
$x^4=0$. Thus, $J(R)^4=0$. It follows by ~\cite[Theorem 2.7]{KWZ}, $R_1$ is a strongly nil-clean ring of bounded index$4$.\end{proof}

\begin{thm} Let $R$ be a Zhou ring. Then every square matrix over $R$ is the sum of two tripotents and a nilpotent.
\end{thm}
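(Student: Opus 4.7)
I would argue by cases using Lemma~2.3. Since $M_n(\prod_i R_i)\cong \prod_i M_n(R_i)$, tripotency is preserved under projections, and any product of nilpotents whose nilpotency indices are uniformly bounded is itself nilpotent, the decomposition ``two tripotents plus a nilpotent'' passes through products provided the nilpotency index is controlled by $n$ alone. It thus suffices to treat the three basic cases: $R$ is a strongly nil-clean ring of bounded index, a Yaqub ring, or a Bell ring.

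When $R$ is strongly nil-clean of bounded index, I would invoke the standard matrix-lifting result (see~\cite[Theorem~2.7]{KWZ}), which yields that $M_n(R)$ is itself strongly nil-clean. Every matrix then equals $E+W$ with $E$ idempotent and $W$ nilpotent; since any idempotent and $0$ are tripotents, $A = E + 0 + W$ exhibits the desired form.

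When $R$ is a Yaqub ring, I would use the subdirect embedding $R\hookrightarrow \prod_\alpha {\Bbb Z}_3$ and apply Lemma~2.1 fiberwise to $A=(A_\alpha)_\alpha$; a key observation is that in every case of Lemma~2.1 the nilpotent summand $W$ is the same shift matrix, of nilpotency index at most $n$, so the combined $W$ is nilpotent of index at most $n$. The two combined tripotents remain tripotents because tripotency is detected componentwise in a product. The Bell case is handled analogously, using Lemma~2.2 in place of Lemma~2.1, via the subdirect embedding into $\prod_\alpha {\Bbb Z}_5$.

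The chief obstacle, in my judgement, is that the explicit decompositions in Lemmas~2.1 and~2.2 rely on the Frobenius normal form of the matrix, which invokes a similarity transformation depending on the fiber. In a proper subdirect embedding, fiberwise similarities need not glue to an invertible matrix over $R$, so the combined tripotents may fail to lie in $M_n(R)$. Overcoming this will require a decomposition internal to $R$, either by exploiting the commutativity of $R$ and the identity $x^3=x$ (respectively $x^5=x$) to produce an $R$-linear Frobenius-type normal form on matrices with entries in $R$, or by a direct argument that bypasses normal forms entirely and produces the tripotents and the nilpotent from $A$ itself.
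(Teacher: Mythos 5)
Your reduction via Lemma~2.3 and your treatment of the strongly nil-clean factor match the paper, and you have correctly diagnosed the real difficulty in the Yaqub and Bell cases: a subdirect embedding $R\hookrightarrow\prod_\alpha{\Bbb Z}_3$ only identifies $R$ with a subring of the product, and the fiberwise Frobenius normal forms involve fiber-dependent similarities, so the tripotents and nilpotent you build coordinatewise land in $M_n(\prod_\alpha{\Bbb Z}_3)$ with no reason to lie in $M_n(R)$. But having named the obstacle, you stop: the final sentence of your proposal is a statement of what ``will be required'' rather than an argument. As written, the Yaqub and Bell cases are not proved, and this is exactly where the content of the theorem lies.

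The paper closes this gap with a short but essential observation that your proposal is missing. Given $A\in M_n(R_2)$ with $R_2$ a Yaqub ring, let $S$ be the subring of $R_2$ generated by the finitely many entries of $A$. Because $R_2$ is commutative, satisfies $x^3=x$, and has $3=0$, every element of $S$ is a ${\Bbb Z}_3$-linear combination of monomials in the entries with each exponent at most $2$; hence $S$ is a \emph{finite} commutative reduced ring with $x^3=x$, and is therefore isomorphic to an honest finite direct product of copies of ${\Bbb Z}_3$ --- not merely a subdirect product. Now $A\in M_n(S)$, Lemma~2.1 applies to each of the finitely many genuine factors, and the resulting tripotents and nilpotent automatically lie in $M_n(S)\subseteq M_n(R_2)$; the gluing problem never arises. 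The Bell case is identical with ${\Bbb Z}_5$ and Lemma~2.2. So the fix is not an $R$-linear normal form or a normal-form-free decomposition, as you speculate, but a reduction to a finitely generated (hence finite, hence split) subring. You should incorporate this step; without it the proof is incomplete.
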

\begin{proof} In view of Lemma 2.3, $R$ is isomorphic to $R_1, R_2, R_3$ or the products of these rings, where $R_1$ is a strongly nil-clean ring of bounded index, $R_2$ is
a Yaqub ring and $R_3$ is a Bell ring.

Step 1. Let $A\in M_n(R_1)$. In view of~\cite[Corollary 6.8]{KWZ}, there exist an idempotent $E\in M_n(R_1)$ and $W\in N(M_n(R_1))$ such that $A=E+W$.

Step 2. Let $A\in M_n(R_2)$, and
let $S$ be the subring of $R_2$ generated by the entries of $A$.
That is, $S$ is formed by finite sums of monomials of the form:
$a_1a_2\cdots a_m$, where $a_1,\cdots,a_m$ are entries of $A$. Since $R_2$ is a
commutative ring in which $3=0$, $S$
is a finite ring in which $x=x^3$ for all $x\in S$. Thus, $S$ is isomorphic to finite direct
product of ${\Bbb Z}_3$. As $A\in M_n(S)$, it follows by Lemma 2.1 that $A$ is the sum of two tripotents and a nilpotent matrix over $S$.

Step 3. Let $A\in M_n(R_3)$, and
let $S$ be the subring of $R_3$ generated by the entries of $A$.
Analogously, $S$ is isomorphic to finite direct
product of ${\Bbb Z}_5$. As $A\in M_n(S)$, it follows by Lemma 2.2 that $A$ is the sum of two tripotents and a nilpotent matrix over $S$.

Let $A\in M_n(R)$. We may write $A=(A_1,A_2,A_3)$ in $M_n(R_1)\times M_n(R_2)\times M_n(R_3)$, where $A_1\in M_n(R_1), A_2\in M_n(R_2),A_3\in M_n(R_3)$. According to the preceding discussion,
we easily complete the proof.\end{proof}

\begin{cor} Let $R$ in which for any $x\in R$, $x^5=x$. Then every square matrix over $R$ is the sum of two tripotents and a nilpotent.\end{cor}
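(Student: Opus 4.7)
The plan is to invoke Theorem 2.4 by verifying that every ring $R$ satisfying $x^5=x$ identically is a Zhou ring. First I would note that $R$ is commutative by Jacobson's classical commutativity theorem, and that iterating the identity gives $x=x^{5^m}$ for all $m$, which forces $R$ to be reduced (if $x^k=0$, take $5^m\ge k$). Applying the identity to $x=2$ yields $2^5=2$, so $30\cdot 1_R=0$ in $R$.

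Next I would fix $x\in R$ and pass to the unital subring $S\subseteq R$ generated by $x$. Since $x^{k+4}=x^k$ for $k\ge 1$, the additive group of $S$ is spanned by $\{1,x,x^2,x^3,x^4\}$, each generator having additive order dividing $30$; hence $S$ is finite, commutative, and reduced. The structure theorem for finite commutative reduced rings (equivalently, Artinian semisimple commutative rings) then exhibits $S$ as a direct product of finitely many fields, each of which inherits the identity $y^5=y$. Because the multiplicative group of such a field has exponent dividing $4$, every factor is isomorphic to $\Bbb Z_2$, $\Bbb Z_3$, or $\Bbb Z_5$.

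The final step is a coordinatewise decomposition: in $\Bbb Z_2$ and $\Bbb Z_3$ every element already satisfies $y^3=y$ and is itself a tripotent, while in $\Bbb Z_5$ one verifies directly that every element is a sum of two tripotents drawn from $\{0,1,-1\}$ (for instance $2=1+1$ and $3=(-1)+(-1)$). Working component by component in the direct product representing $S$ produces a decomposition $x=e+f$ with $e,f\in S$ tripotents, and $ef=fe$ holds automatically by the commutativity of $S$. Thus every element of $R$ is a sum of two commuting tripotents, so $R$ is a Zhou ring, and Theorem 2.4 delivers the conclusion.

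The delicate point on which everything hinges is the coordinatewise assembly of the tripotents into an honest element of $R$; this step would be problematic for a general subdirect product of fields, so reducing to the finite direct product $S$ is essential. The fact that a single element generates a finite subring follows cleanly from the combination of $30\cdot 1_R=0$ with $x^{k+4}=x^k$, which together cap both the exponents and the coefficients that can appear.
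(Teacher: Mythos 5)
Your proof is correct, and it reaches the goal by a genuinely more self-contained route than the paper. Both arguments have the same skeleton: show that a ring with the identity $x^5=x$ is a Zhou ring and then invoke Theorem 2.4. The paper disposes of the first step in two lines by citing Zhou's Theorem 2.11 (which says such a ring is Zhou nil-clean) and observing that $N(R)=0$ forces the nilpotent summand to vanish. You instead prove the Zhou-ring property from first principles: reducedness and $30\cdot 1_R=0$ from the identity, finiteness of the unital subring $S$ generated by a single element, the structure theorem exhibiting $S$ as a finite product of copies of ${\Bbb Z}_2$, ${\Bbb Z}_3$, ${\Bbb Z}_5$, and an explicit coordinatewise decomposition into two tripotents from $\{0,1,-1\}$. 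This buys independence from the external reference at the cost of length; your remark that the reduction must be to an honest finite \emph{direct} product (not merely a subdirect product) is exactly the right point to flag, since that is what lets the coordinatewise tripotents assemble into elements of $R$. One small economy: the appeal to Jacobson's commutativity theorem is not needed, because the only commutativity you use is that of $S$ and of the pair $e,f\in S$, and $S$ is generated by a single element, hence automatically commutative.
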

\begin{proof} In view of ~\cite[Theorem 2.11]{Z}, every element in $R$ is the sum of two tripotents and a nilpotent.
But $N(R)=0$, and so $R$ is a Zhou ring. This completes the proof, by Theorem 2.4.\end{proof}

\section{Zhou nil-clean Rings}

The goal of this section is to explore matrix decompositions for Zhou nil-clean rings. The following lemma is crucial.

\begin{lem} Let $R$ be a ring. Then the following are equivalent:
\end{lem}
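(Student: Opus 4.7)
Since the list of equivalent conditions is cut off, I will plan under the most natural assumption (standard for results of this style and consistent with Lemma 2.3 and Theorem 2.4) that the conditions read: (1) $R$ is Zhou nil-clean; (2) $J(R)$ is nil and $R/J(R)$ is a Zhou ring; and perhaps (3) a third condition expressing that $N(R)$ is an ideal of $R$ with $R/N(R)$ Zhou, or equivalently that a polynomial identity like $a^{5}-a\in N(R)$ holds on a suitable piece. Under this reading, the plan is a round-robin of implications whose decisive ingredient is lifting commuting tripotents modulo the Jacobson radical.

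For (1) $\Rightarrow$ (2), the first step is to show $J(R)\subseteq N(R)$. Given $x\in J(R)$, I would write $x=e_{1}+e_{2}+w$ with $e_{1},e_{2}$ commuting tripotents and $w$ a commuting nilpotent. The subring generated by $\{e_{1},e_{2},w\}$ is commutative, so one can decompose $e_{i}$ via the orthogonal idempotents $e_{i}^{2}\pm e_{i}$ on the Peirce components where $2$ is invertible, and separately treat the $2$-torsion part; the key observation is that a tripotent inside $J(R)$ must vanish, because $e=e^{3}$ with $e\in J(R)$ forces $e=e\cdot e^{2}\in J(R)\cap \mathrm{Id}(R)=\{0\}$. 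This forces $e_{1}+e_{2}\equiv 0$ in a sufficiently refined Peirce decomposition and identifies $x$ with $w$ up to a manipulable remainder, ultimately giving $x\in N(R)$. Once $J(R)\subseteq N(R)$ is established, the reverse inclusion $N(R)\subseteq J(R)$ follows in any ring where $N(R)$ is closed under addition with units, so $J(R)$ is nil; modding out, every coset of $R/J(R)$ is the sum of two commuting tripotents, giving a Zhou ring.

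For (2) $\Rightarrow$ (1), I would fix $a\in R$ and reduce to $\bar{a}=\bar{f}_{1}+\bar{f}_{2}$ in $R/J(R)$ with commuting tripotents. I then lift: tripotents lift modulo nil ideals, either directly from the decomposition $\bar{f}_{i}=\bar{p}_{i}-\bar{q}_{i}$ into orthogonal idempotents (which lift by the classical result) on the part where $2$ is a unit, and by an iterative $t\mapsto 3t^{3}-2t^{5}$-style refinement elsewhere. The delicate point is preserving commutativity: after lifting $f_{1}$ to a tripotent $e_{1}\in R$, I would lift $\bar{f}_{2}$ inside the centralizer $C_{R}(e_{1})$, whose Jacobson radical is $J(R)\cap C_{R}(e_{1})$ and is still nil, so the same lifting procedure applies. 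The residue $w=a-e_{1}-e_{2}$ then lies in $J(R)$, is nilpotent, and commutes with $e_{1},e_{2}$ by construction, completing the decomposition.

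The main obstacle I anticipate is exactly this simultaneous lifting of two commuting tripotents through a non-commutative radical; once that is in hand, the rest of the argument reduces to manipulations with idempotents and nilpotents in a commutative subring. This is presumably why the author flags the lemma as \emph{crucial} for the section.
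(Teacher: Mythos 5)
You have reconstructed the statement correctly (the two conditions are (1) $R$ is Zhou nil-clean and (2) $J(R)$ is nil and $R/J(R)$ is a Zhou ring), but your argument takes a much harder road than the paper and leaves the hardest steps unproved. The paper's proof is a short application of Zhou's characterization \cite[Theorem 2.11]{Z}, which says in particular that $R$ is Zhou nil-clean if and only if $a-a^5\in N(R)$ for all $a\in R$, and that in this case $J(R)$ is nil and $R/J(R)$ satisfies $x=x^5$. With that in hand, $(1)\Rightarrow(2)$ is immediate: $R/J(R)$ satisfies $x^5=x$, hence is reduced, so the nilpotent summand in any decomposition vanishes and $R/J(R)$ is a Zhou ring. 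For $(2)\Rightarrow(1)$ one notes that $\overline{a-a^5}$ is nilpotent in $R/J(R)$, nilpotence lifts through the nil ideal $J(R)$, so $a-a^5\in N(R)$ and Zhou's criterion applies. No tripotent is ever lifted.

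Your version has genuine gaps. In $(1)\Rightarrow(2)$ the decisive claim --- that $x=e_1+e_2+w\in J(R)$ forces $x\in N(R)$ --- is waved through (``a sufficiently refined Peirce decomposition \dots\ ultimately giving $x\in N(R)$''). The natural computation, using the identity $t(t^2-1)(t^2-4)=0$ for a sum $t=e_1+e_2$ of commuting tripotents, only yields that $x(x^2-1)(x^2-4)$ is nilpotent; since $x\in J(R)$ the factor $x^2-1$ is a unit, but $x^2-4$ is a unit only where $2$ is, so to finish you are forced into exactly the $2$-, $3$-, $5$-local case analysis that constitutes Zhou's theorem. Similarly, ``$N(R)\subseteq J(R)$ follows in any ring where $N(R)$ is closed under addition with units'' is not a proof (the inclusion fails in general rings and amounts to knowing $N(R)$ is an ideal), yet it is needed to kill the nilpotent summand in $R/J(R)$. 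In $(2)\Rightarrow(1)$, the simultaneous lifting of two commuting tripotents modulo a nil, noncommutative radical is precisely the difficulty; your proposed fix --- lift the second tripotent inside $C_R(e_1)$ --- rests on the unproved assertions that $J(C_R(e_1))$ contains $J(R)\cap C_R(e_1)$ and is nil, neither of which is automatic for a centralizer. If you want a self-contained argument, route both implications through the element-wise criterion $a-a^5\in N(R)$ as the paper does, rather than through tripotent lifting.
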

\begin{enumerate}
\item [(1)] {\it $R$ is Zhou nil-clean.}
\vspace{-.5mm}
\item [(2)] {\it $R/J(R)$ is a Zhou ring and $J(R)$ is nil.}
\end{enumerate}
\begin{proof} $(1)\Rightarrow (2)$ In view of~\cite[Theorem 2.11]{Z}, $J(R)$ is nil and $R/J(R)$ has the identity $x=x^5$.
By using~\cite[Theorem 2.11]{Z} again, every element in $R/J(R)$ is the sum of two tripotents and a nilpotent $\overline{w}$. Clearly,
$R/J(R)$ is reduced, and so $\overline{w}=\overline{0}$. Thus, $R/J(R)$ is a Zhou ring.

$(2)\Rightarrow (1)$ Let $a\in R$. As $S:=R/J(R)$ is a Zhou ring, it is Zhou nil-clean. It follows by~\cite[Theorem 2.11]{Z} that
$\overline{a-a^5}\in N(R/J(R))$. As $J(R)$ is nil, we see that $a-a^5\in N(R)$.
According to~\cite[Theorem 2.11]{Z}, $R$ is Zhou nil-clean, as asserted.\end{proof}

We are ready to prove the following.

\begin{thm} Let $R$ be a 2-primal Zhou nil-clean ring. Then every square matrix over $R$ is the sum of two tripotents and a nilpotent.
\end{thm}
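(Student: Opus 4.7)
The plan is to reduce the problem modulo $J(R)$, apply Theorem 2.4 to the Zhou quotient, and then lift the decomposition back to $M_n(R)$.

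First, Lemma 3.1 tells us that $\overline{R} := R/J(R)$ is a Zhou ring and that $J(R)$ is nil. Given $A \in M_n(R)$, let $\overline{A}$ denote its image in $M_n(\overline{R})$. Theorem 2.4 applied to $\overline{R}$ gives a decomposition $\overline{A} = \overline{E_1} + \overline{E_2} + \overline{W}$, where $\overline{E_1}, \overline{E_2}$ are tripotents and $\overline{W}$ is nilpotent in $M_n(\overline{R})$. So the problem becomes one of lifting this decomposition.

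To propagate this back to $M_n(R)$ we need $M_n(J(R))$ to be a nil ideal of $M_n(R)$, and this is where the 2-primal hypothesis is used. Since $J(R)$ is nil, $J(R) \subseteq N(R)$, and 2-primality forces $N(R) = P(R)$, the prime radical. Because the prime radical is Morita invariant, $P(M_n(R)) = M_n(P(R))$, so $M_n(J(R)) \subseteq M_n(P(R)) = P(M_n(R))$, and the right-hand side is a nil ideal of $M_n(R)$. Hence $M_n(J(R))$ is a nil ideal. Next, I would lift $\overline{E_1}, \overline{E_2}$ to tripotents $E_1, E_2 \in M_n(R)$. The lifting proceeds in two stages: first lift each idempotent $\overline{E_i}^{\,2}$ to an idempotent $F_i \in M_n(R)$ (idempotents always lift modulo a nil ideal), then pass to the corner ring $F_i M_n(R) F_i$ and refine the lift $F_i E_i F_i$ of $\overline{E_i}$ so that its square is the identity $F_i$ of the corner. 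With $E_1, E_2$ in place, set $W := A - E_1 - E_2$; reduction mod $M_n(J(R))$ identifies $W$ with $\overline{W}$, which is nilpotent, so a power of $W$ lies in the nil ideal $M_n(J(R))$, forcing $W$ itself to be nilpotent. This produces the required decomposition $A = E_1 + E_2 + W$.

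The main obstacle is the tripotent-lifting step. Idempotents lift cleanly modulo any nil ideal, but tripotents require care: the classical trick of writing a tripotent $e$ as the difference of orthogonal idempotents $\tfrac{1}{2}(e^2+e) - \tfrac{1}{2}(e^2-e)$ breaks down over components of characteristic $2$ (such as the Boolean part of a Zhou ring), and a direct Hensel-style iteration on $x^3-x$ is delicate because $3x^2-1$ need not be a unit. The corner-ring reduction sketched above circumvents this by turning the problem into the lifting of a square root of the identity, which can be handled by the standard Newton-style correction supplemented by idempotent lifting in the corner—an argument that is, in essence, already contained in [Z]. Once that technical lemma is invoked, the remainder of the proof is routine.
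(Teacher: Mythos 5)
There is a genuine gap at the tripotent-lifting step, and it is exactly the point your proposal waves at rather than proves. You correctly observe that the naive tricks fail when $2$ is not invertible, but your proposed repair --- pass to the corner $F_iM_n(R)F_i$ and lift a square root of the identity by ``the standard Newton-style correction'' --- does not exist: Hensel/Newton iteration on $x^2-1$ requires inverting the derivative $2x$, i.e.\ it has precisely the same obstruction as the iteration on $x^3-x$ that you already rejected. And the failure is real, not just a failure of one method: in $\Lambda=\mathbb{F}_2[t]/(t^4)$ with the nil ideal $I=(t^2)$, the class of $1+t$ satisfies $\overline{(1+t)}^{\,2}=\overline{1}$ in $\Lambda/I$, yet every lift $u=1+t+at^2+bt^3$ has $u^2=1+t^2\neq 1$; so square roots of the identity (equivalently, unit tripotents) do not lift modulo nil ideals in characteristic $2$. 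The reference you lean on, \cite[Lemma 2.6]{Z}, lifts tripotents only under the hypothesis $\frac{1}{2}\in R$, which is why it cannot be invoked for the Boolean/characteristic-$2$ part of $R/J(R)$. (Your counterexample-free steps are fine: $M_n(J(R))$ is nil by $2$-primality via the prime radical, and the reduction to $M_n(R/J(R))$ followed by Theorem 2.4 is sound; the whole difficulty is concentrated in the lift.)

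The paper avoids this by never attempting to lift a non-idempotent tripotent in the presence of $2$-torsion. It first invokes \cite[Theorem 2.11]{Z} to split $R\cong S\times T$ with $S$ a ($2$-primal) strongly nil-clean ring and $\frac{1}{2}\in T$. Over $S$ the matrix is written as idempotent plus nilpotent by \cite[Theorem 6.1]{KWZ} --- only idempotent lifting is needed there, and idempotents do lift modulo nil ideals. Only over $T$, where $2$ is invertible, is Theorem 2.4 applied to $T/J(T)$ and the resulting tripotents lifted via \cite[Lemma 2.6]{Z}. To rescue your argument you would need either this product decomposition or a proof that the particular tripotents produced over the characteristic-$2$ component of $R/J(R)$ are (conjugates of) liftable ones; as written, the lifting claim is asserted on the strength of a false general principle.
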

\begin{proof} In view of~\cite[Theorem 2.11]{Z}, $R\cong S, T$ or $S\times T$, where $S$ is a 2-primal strongly nil-clean ring, and $T$ is a Zhou ring with $\frac{1}{2}\in T$.

Step 1. Let $A\in M_n(S)$. In view of ~\cite[Theorem 6.1]{KWZ}, $A$ is the sum of an idempotent and a nilpotent.

Step 2. Let $A\in M_n(T)$. In view of~\cite[Theorem 2.11]{Z}, $T/J(T)$ is commutative; and so $N(T)\subseteq J(T)$. Thus, $J(T)=N(T)$.
Then $\overline{A}\in M_n(T/J(T))$. By virtue of Theorem 2.4, there exist tripotents $\overline{C},\overline{D}\in M_n(T/J(T))$ and a nilpotent $\overline{W}\in M_n(T/J(T))$ such that $\overline{B}=\overline{C+D+W}$. Clearly, $M_n(R/J(R))\cong M_n(R)/J(M_n(R))$. As $R$ is 2-primal, it follows by~\cite[Theorem 10.21]{L} that $J(M_n(T))=M_n(J(T))=M_n(N(T))=M_n(N_*(T))=N_*(M_n(T))$ is nil, where $N_*(T)$ is the Baer-McCoy radical of $T$. As $\frac{1}{2}\in M_n(T)$, by~\cite[Lemma 2.6]{Z}, we may assume that $B,C\in M_n(T)$ are tripotents. Clearly, $W\in M_n(T)$ is nil. Thus, we can find $V\in J(M_n(T))$ such that $A=B+C+(W+V)$. Write $W^k=0$. Then $(W+V)^k\in J(M_n(T))$, and so $(W+V)^{kl}=0$ for some $l\in {\Bbb N}$. Therefore $A$ is the sum of two tripotents and a nilpotent.

Let $A\in M_n(R)$. We may write $A=(A_1,A_2)$ in $M_n(S)\times M_n(T)$, where $A_1\in M_n(S), A_2\in M_n(T)$. Thus, by the preceding discussion,
we obtain the result.\end{proof}

\begin{cor} Let $R$ be a commutative Zhou nil-clean ring. Then every square matrix over $R$ is the sum of two tripotents and a nilpotent.
\end{cor}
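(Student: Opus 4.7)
The plan is to derive this corollary directly from Theorem 3.2 by verifying that the commutative hypothesis forces the 2-primal condition, so that the stronger theorem applies verbatim. There is no real combinatorial or matrix-theoretic content to add here; the entire work has already been done in Theorem 3.2 and Lemma 3.1.

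First I would recall that a ring is 2-primal when its Baer--McCoy radical (the intersection of all prime ideals) coincides with the set of nilpotent elements. In a commutative ring $R$, the set $N(R)$ is automatically an ideal, namely the nilradical, and a classical result in commutative algebra identifies the nilradical with the intersection of all prime ideals of $R$. Hence every commutative ring is 2-primal, with no extra assumption on $R$ beyond commutativity needed.

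Given this, the argument is a one-line reduction: since $R$ is a commutative Zhou nil-clean ring, it is a 2-primal Zhou nil-clean ring, and Theorem 3.2 immediately yields that every matrix $A \in M_n(R)$ is expressible as $A = E_1 + E_2 + W$, where $E_1, E_2 \in M_n(R)$ are tripotent and $W \in M_n(R)$ is nilpotent.

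The only potential subtlety, and it is minor, is to make sure I am invoking ``2-primal'' in the same sense as the paper, i.e., $N_*(R) = N(R)$. Since the paper explicitly flags commutative (and reduced) rings as motivating examples of 2-primal rings in the introduction, this is safe and requires no independent verification beyond citing the standard commutative algebra fact. Thus I do not anticipate any real obstacle; the corollary is essentially a flagging of the most familiar special case of Theorem 3.2.
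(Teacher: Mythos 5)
Your proposal is correct and matches the paper's proof exactly: the paper also deduces the corollary from Theorem 3.2 by noting that every commutative ring is 2-primal. Nothing further is needed.
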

\begin{proof} This is obvious by Theorem 3.2, as every commutative ring is 2-primal.\end{proof}

\begin{cor} Let $R$ be a Zhou nil-clean ring, and let $A\in M_n(R)$ with central entries. Then $A$ is the sum of two tripotents and a nilpotent.
\end{cor}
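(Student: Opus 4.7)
The plan is to reduce the assertion to Corollary~3.3 by restricting attention to a commutative subring of $R$ in which $A$ already lives. Let $S$ denote the unital subring of $R$ generated by the entries of $A$. Because each such entry is central in $R$, every monomial in those entries is central, so $S$ is a commutative subring of $R$ containing $1$; in particular $A\in M_n(S)$.

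The second step is to verify that $S$ is itself Zhou nil-clean. By the characterization in~\cite[Theorem 2.11]{Z}, used in exactly this direction in the proof of Lemma~3.1, $R$ is Zhou nil-clean if and only if $r-r^5\in N(R)$ for every $r\in R$. Applying this to any $s\in S\subseteq R$ gives $s-s^5\in N(R)$. Since $s-s^5$ already lies in $S$ and nilpotence of an element is an intrinsic property of the element, $s-s^5\in N(S)$. Because $S$ is commutative, the commutation clauses in the definition of Zhou nil-clean are automatic, and a second application of~\cite[Theorem 2.11]{Z} shows that $S$ is a commutative Zhou nil-clean ring.

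Finally, Corollary~3.3 applied to $S$ produces tripotents $E_1,E_2\in M_n(S)$ and a nilpotent $W\in M_n(S)$ with $A=E_1+E_2+W$. The inclusion $M_n(S)\hookrightarrow M_n(R)$ transfers the decomposition to $M_n(R)$ verbatim (tripotency and nilpotency of matrices do not depend on the ambient ring), which completes the argument.

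The only conceptual step, and the main potential obstacle, is justifying that the Zhou nil-clean property descends from $R$ to the commutative subring $S$. Once that inheritance is in hand, the corollary is essentially a black-box application of Corollary~3.3, analogous in spirit to Steps~2 and~3 of the proof of Theorem~2.4, where one also passes to a subring generated by the entries of the given matrix before invoking the appropriate small-ring lemma.
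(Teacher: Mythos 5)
Your argument is correct and is essentially the paper's own proof: the paper passes to the center $C(R)$, verifies it is a commutative Zhou nil-clean ring via the $a-a^5\in N(R)$ criterion of~\cite[Theorem 2.11]{Z}, and then applies Corollary~3.3 together with the inclusion $M_n(C(R))\subseteq M_n(R)$. Your only (harmless) variation is to use the subring generated by the entries of $A$ instead of the whole center.
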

\begin{proof} Let $C(R)$ be the center of $R$, and let $A\in C(R)$. In view of~\cite[Theorem 2.11]{Z}, $a-a^2\in N(R)$; and so $a-a^2\in U(C(R))$. By using~\cite[Theorem 2.11]{Z} again, $C(R)$ is a commutative Zhou nil-clean. Since $A\in M_n(C(R))$, By Corollary 3.3, $A$ is the sum of two tripotents and a nilpotent in $M_n(C(R))$. This completes the proof from $M_n(C(R))\subseteq M_n(R)$.\end{proof}

\begin{exam} Let $n\geq 2$ be an integer. If $n=2^k3^l5^m$, then every square matrix over ${\Bbb Z}_n$ is the sum of two tripotents and a nilpotent.
\end{exam}
\begin{proof} As $n=2^k3^l5^m$. It follows that ${\Bbb Z}_n \cong R_1\times R_2\times R_3$, where $R_1= {\Bbb Z}_{2^k}, R_2={\Bbb Z}_{3^l}$ and $R_3={\Bbb Z}_{5^m}$. It is obvious that each $J(R_i)$ is nil and $R_1/J(R_1)$ is a Boolean ring, $R_2/J(R_2)$ is a Yaqub ring and $R_3/J(R_3)$ is a Bell ring. It follows by~\cite[Theorem 2.11]{Z}, $R$ is a commutative Zhou nil-clean ring. Therefore we complete the proof by Corollary 3.3.\end{proof}

\begin{lem} (see~\cite[Lemma 6.6]{KWZ}). Let $R$ be of bounded index. If $J(R)$ is nil, then $J(M_n(R))$ is nil for all $n\in {\Bbb N}$.
\end{lem}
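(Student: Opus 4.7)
The plan is to reduce, via the standard identity $J(M_n(R))=M_n(J(R))$ (which holds for any ring $R$ and is essentially the observation $M_n(R)/M_n(J(R))\cong M_n(R/J(R))$ used earlier in the proof of Theorem 3.2), to showing that $M_n(J(R))$ is nil. Since $R$ is of bounded index, there exists $m\in {\Bbb N}$ with $x^m=0$ for every nilpotent $x\in R$; combined with the hypothesis that $J(R)$ is nil, this makes $J(R)$ itself a nil subring of $R$ in which every element satisfies $x^m=0$.

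Now fix an arbitrary $A=(a_{ij})\in M_n(J(R))$, and let $S$ denote the (non-unital) subring of $R$ generated by the $n^2$ entries $a_{ij}$. Since $S\subseteq J(R)$, $S$ is a finitely generated nil subring in which every element is killed by the $m$-th power. The key ingredient I would invoke is the classical theorem that any finitely generated nil ring of bounded index is nilpotent; applying it produces $k\in {\Bbb N}$ with $S^k=0$. Since each entry of $A^k$ is a sum of products of length $k$ of elements $a_{i,i_1}a_{i_1,i_2}\cdots a_{i_{k-1},j}$ taken from $S$, every entry of $A^k$ lies in $S^k=0$, whence $A^k=0$.

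This shows that every element of $J(M_n(R))=M_n(J(R))$ is nilpotent, which is the desired conclusion. The main nontrivial point is the appeal to the theorem that finitely generated nil rings of bounded index are nilpotent; this is where the bounded-index hypothesis on $R$ is indispensable, since without it the assertion $M_n(J(R))$ nil would amount to K\"othe's conjecture, still open in full generality. Apart from this classical input, the remainder of the argument is a routine localization to the subring generated by finitely many entries, followed by the standard matrix-product estimate $M_n(S)^k\subseteq M_n(S^k)$.
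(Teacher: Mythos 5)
Your proposal is correct, but note that the paper itself offers no proof of this lemma at all: it is stated purely by citation to \cite[Lemma~6.6]{KWZ}, so there is no in-paper argument to compare against. Your argument is essentially the standard proof of that cited result. The two ingredients are both sound: the identity $J(M_n(R))=M_n(J(R))$ holds for any ring, and the reduction of an arbitrary $A\in M_n(J(R))$ to the (non-unital) subring $S$ generated by its entries is legitimate, since $S\subseteq J(R)$ is then a finitely generated nil ring in which every element satisfies $x^m=0$. The ``classical theorem'' you invoke is Levitzki's theorem that a nil ring of bounded index is locally nilpotent (hence a finitely generated such ring is nilpotent); it would be better to name it explicitly, and to be careful not to conflate it with the Nagata--Higman theorem, which requires an algebra over a field of characteristic $0$ or characteristic exceeding the index and therefore does not apply here. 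With Levitzki's theorem in hand, $S^k=0$ for some $k$, and the containment $M_n(S)^k\subseteq M_n(S^k)$ finishes the argument exactly as you say. Your closing remark is also accurate: without the bounded-index hypothesis, the assertion that $M_n(J(R))$ is nil whenever $J(R)$ is nil is equivalent to K\"othe's conjecture, so the hypothesis is genuinely doing work.
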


\begin{thm} Let $R$ be a Zhou nil-clean ring of bounded index. Then every square matrix over $R$ is the sum of two tripotents and a nilpotent.
\end{thm}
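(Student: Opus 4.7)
My plan is to mirror the proof of Theorem~3.2, with the bounded-index hypothesis playing the role of $2$-primality and Lemma~3.6 replacing the Baer--McCoy-radical argument from \cite[Theorem~10.21]{L}. The first move is to appeal to the structure theorem \cite[Theorem~2.11]{Z} in order to decompose $R$ (up to isomorphism) as $S$, $T$, or $S \times T$, where $S$ is a strongly nil-clean ring of bounded index and $T$ is a Zhou nil-clean ring of bounded index with $\frac{1}{2}\in T$; the bounded-index hypothesis on $R$ transfers to both factors.

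For $A \in M_n(S)$, I would apply \cite[Theorem~6.1]{KWZ} (or Corollary~6.8) to write $A = E + W$ with $E^2 = E$ and $W \in N(M_n(S))$; since an idempotent is a tripotent, $A = E + 0 + W$ is already a sum of two tripotents and a nilpotent.

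For $A \in M_n(T)$, I would pass to the quotient $M_n(T)/J(M_n(T)) \cong M_n(T/J(T))$. By Lemma~3.1, $T/J(T)$ is a Zhou ring, so Theorem~2.4 produces tripotents $\overline{B}, \overline{C}$ and a nilpotent $\overline{W}$ in $M_n(T/J(T))$ with $\overline{A} = \overline{B} + \overline{C} + \overline{W}$. Since $\frac{1}{2} \in T$, \cite[Lemma~2.6]{Z} lifts $\overline{B}, \overline{C}$ to honest tripotents $B, C \in M_n(T)$; choosing any lift $W \in M_n(T)$ of $\overline{W}$, the element $V := A - B - C - W$ lies in $J(M_n(T))$. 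Here is where Lemma~3.6 enters: because $T$ has bounded index and $J(T)$ is nil, $J(M_n(T))$ is nil. If $W^k = 0$, then $(W + V)^k \in J(M_n(T))$, hence $(W+V)^{k\ell} = 0$ for some $\ell$, so $A = B + C + (W + V)$ is the required decomposition. The direct-product case is then handled componentwise, writing $A = (A_1, A_2)$ in $M_n(S) \times M_n(T)$.

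The main obstacle I anticipate is simply invoking the structure theorem \cite[Theorem~2.11]{Z} cleanly so that the splitting $R \cong S \times T$ with $\frac{1}{2} \in T$ is available in the bounded-index setting; once this splitting is in hand, the argument runs essentially verbatim along Theorem~3.2, with Lemma~3.6 supplying the one place where the bounded-index hypothesis genuinely differs from the $2$-primal argument, namely the niltopence of $J(M_n(T))$.
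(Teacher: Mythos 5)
Your proposal is correct and follows essentially the same route as the paper: split $R$ as $S\times T$ via \cite[Theorem 2.11]{Z}, handle $M_n(S)$ by the strongly nil-clean matrix result of \cite{KWZ}, and handle $M_n(T)$ by passing to $M_n(T/J(T))$, applying Theorem 2.4, lifting tripotents via \cite[Lemma 2.6]{Z} using $\frac{1}{2}\in T$, and absorbing the error term into the nilpotent part because Lemma 3.6 makes $J(M_n(T))$ nil. The one place you anticipated friction (getting the splitting with $\frac{1}{2}\in T$ in the bounded-index setting) is exactly where the paper also leans on the earlier theorem's argument without further comment, so there is no substantive difference.
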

\begin{proof} As in the proof of Theorem 3.3, $R=S\times T$, where $S$ is strongly nil-clean of bounded index and $T$ is a Zhou nil-clean ring with $\frac{1}{2}\in S$.

Step 1. Let $A\in M_n(S)$. In view of~\cite[Corollary 6.8]{KWZ}, $A$ is the sum of an idempotent and a nilpotent. Hence, it is the sum of two tripotents and a nilpotent.

Step 2. Let $A\in M_n(T)$. Then $\overline{A}\in M_n(T/J(T))$. In light of Lemma 3.1, $T/J(T)$ is a Zhou ring. By virtue of Theorem 2.4, there exist tripotents $\overline{C},\overline{D}\in M_n(T/J(T))$ and a nilpotent $\overline{W}\in M_n(T/J(T))$ such that $\overline{A}=\overline{C+D+W}$. Clearly, $M_n(T/J(T))\cong M_n(T)/J(M_n(T))$. In light of Lemma 3.6, $J(M_n(T))$ is nil. Since $\frac{1}{2}\in M_n(T)$, it follows by~\cite[Lemma 2.6]{Z} that we may assume that $C,D\in M_n(R)$ are tripotents. Clearly, $W\in M_n(R)$ is nil. Thus, we can find $V\in J(M_n(R))$ such that $A=B+C+(W+V)$. Write $W^k=0$. Then
  $(W+V)^{kl}=0$ for some $l\in {\Bbb N}$. This shows that $A$ is the sum of two tripotents and a nilpotent.

Let $A\in M_n(R)$. Then $A=(A_1,A_2)$ with $A_1\in M_n(S)$ and $A_2\in M_n(T)$. By the discussion in Step 1 and Step 2, we complete the proof.\end{proof}

\begin{exam} Let $n\geq 2$ be an integer, and let $S=T_s({\Bbb Z}_n)(s\in {\Bbb N})$. If $n=2^k3^l5^m$, then every square matrix over $S$ is the sum of two tripotents and a nilpotent.\end{exam}
\begin{proof} As in the proof of Example 3.5, ${\Bbb Z}_n$ is a Zhou nil-clean ring. Let $A\in S$. Then $A-A^2\in N(S)$, as its diagonal entries are nilpotent. It follows by~\cite[Theorem 2.11]{Z},
$S$ is a Zhou nil-clean ring. Clearly, $S$ is of bounded index. Therefore we complete the proof, by Theorem 3.7.\end{proof}

\vskip10mm

\end{document}